\begin{document}

\title{Truncated Bernoulli-Carlitz and truncated Cauchy-Carlitz numbers}  

\author{
Takao Komatsu\thanks{
The research of Takao Komatsu was supported in part by the grant of Wuhan University and by the grant of Hubei Provincial Experts Program. 
}\\ 
\small School of Mathematics and Statistics\\
\small Wuhan University\\
\small Wuhan 430072 China\\
\small \texttt{komatsu@whu.edu.cn}
}

\date{
\small MR Subject Classifications: Primary 11R58; Secondary 11T55, 11B68, 11B73, 11B75, 05A15, 05A19.
}

\maketitle

\def\stf#1#2{\left[#1\atop#2\right]} 
\def\sts#1#2{\left\{#1\atop#2\right\}}

\newtheorem{theorem}{Theorem}
\newtheorem{Prop}{Proposition}
\newtheorem{Cor}{Corollary}
\newtheorem{Lem}{Lemma}

\begin{abstract}
In this paper, we define the truncated Bernoulli-Carlitz numbers and the truncated Cauchy-Carlitz numbers 
as analogues of hypergeometric Bernoulli numbers and hypergeometric Cauchy numbers, 
and as extensions of Bernoulli-Carlitz numbers and the Cauchy-Carlitz numbers. 
 These numbers can be expressed explicitly in terms of incomplete Stirling-Carlitz numbers.  \\
{\bf Keywords:} Bernoulli-Carlitz numbers, Cauchy-Carlitz numbers, Stirling-Carlitz numbers, incomplete Stirling numbers. 
\end{abstract}

\maketitle

\section{Introduction}   

For $N\ge 1$, hypergeometric Bernoulli numbers $B_{N,n}$ (\cite{HN1,HN2,Kamano}) are defined by the generating function 
\begin{equation}  
\frac{1}{{}_1 F_1(1;N+1;x)}=\frac{x^N/N!}{e^t-\sum_{n=0}^{N-1}x^n/n!}=\sum_{n=0}^\infty B_{N,n}\frac{x^n}{n!}\,,
\label{def:hgbernoulli}
\end{equation}  
where 
$$
{}_1 F_1(a;b;z)=\sum_{n=0}^\infty\frac{(a)^{(n)}}{(b)^{(n)}}\frac{z^n}{n!}
$$ 
is the confluent hypergeometric function with $(x)^{(n)}=x(x+1)\cdots(x+n-1)$ ($n\ge 1$) and $(x)^{(0)}=1$.  
When $N=1$, $B_n=B_{1,n}$ are classical Bernoulli numbers defined by 
$$
\frac{t}{e^t-1}=\sum_{n=0}^\infty B_n\frac{t^n}{n!}\,. 
$$ 
In addition, hypergeometric Cauchy numbers $c_{N,n}$ (see \cite{Ko3}) are defined by 
\begin{equation} 
\frac{1}{{}_2 F_1(1,N;N+1;-x)}=\frac{(-1)^{N-1}x^N/N}{\log(1+t)-\sum_{n=1}^{N-1}(-1)^{N-1}x^n/n}=\sum_{n=0}^\infty c_{N,n}\frac{x^n}{n!}\,,
\label{def:hgcauchy}
\end{equation}  
where 
$$
{}_2 F_1(a,b;c;z)=\sum_{n=0}^\infty\frac{(a)^{(n)}(b)^{(n)}}{(c)^{(n)}}\frac{z^n}{n!}
$$ 
is the Gauss hypergeometric function.  
When $N=1$, $c_n=c_{1,n}$ are classical Cauchy numbers defined by 
$$
\frac{t}{\log(1+t)}=\sum_{n=0}^\infty c_n\frac{t^n}{n!}\,. 
$$
On the other hand,  L. Carlitz (\cite{Car35}) introduced analogues of Bernoulli numbers for the rational function (finite) field $K=\mathbb F_r(T)$, which are called Bernoulli-Carlitz numbers now. Bernoulli-Carlitz numbers have been studied since then (e.g., see \cite{Car37,Car40,Gekeler,JKS,Rodriguez}).   According to the notations by Goss \cite{Goss}, Bernoulli-Carlitz numbers are defined by 
\begin{equation}  
\frac{x}{e_C(x)}=\sum_{n=0}^\infty\frac{BC_n}{\Pi(n)}x^n\,.
\label{def:bercarlitz} 
\end{equation} 
Here, $e_C(x)$ are the Carlitz exponential defined by 
\begin{equation} 
e_C(x)=\sum_{i=0}^\infty\frac{x^{r^i}}{D_i}\,, 
\label{carlitzexp}
\end{equation}  
where $D_i=[i][i-1]^r\cdots [1]^{r^{i-1}}$ ($i\ge 1$) with $D_0=1$, and $[i]=T^{r^i}-T$.  
The Carlitz factorial $\Pi(i)$ is defined by 
\begin{equation}  
\Pi(i)=\prod_{j=0}^m D_j^{c_j}
\label{carlitzfac}
\end{equation} 
for a non-negative integer $i$ with $r$-ary expansion: 
\begin{equation}  
i=\sum_{j=0}^m c_j r^j\quad (0\le c_j<r)\,. 
\label{r-expansion}
\end{equation}  

As analogues of the classical Cauchy numbers $c_n$, Cauchy-Carlitz numbers $CC_n$ (\cite{KK}) are introduced as 
\begin{equation}  
\frac{x}{\log_C(x)}=\sum_{n=0}^\infty\frac{CC_n}{\Pi(n)}x^n\,.  
\label{def:caucarlitz} 
\end{equation}  
Here, $\log_C(x)$ is the Carlitz logarithm  defined by 
\begin{equation} 
\log_C(x)=\sum_{i=0}^\infty(-1)^i\frac{x^{r^i}}{L_i}\,, 
\label{carlitzlog}
\end{equation}  
where $L_i=[i][i-1]\cdots [1]$ ($i\ge 1$) with $L_0=1$.  

In \cite{KK}, Bernoulli-Carlitz numbers and Cauchy-Carlitz numbers are expressed explicitly 
by using the Stirling-Carlitz numbers of the second kind and of the first kind, respectively.  
These properties are the extensions that Bernoulli numbers and Cauchy numbers are expressed explicitly 
by using the Stirling numbers of the second kind and of the first kind, respectively.  

In this paper, we define the truncated Bernoulli-Carlitz numbers and the truncated Cauchy-Carlitz numbers 
as analogues of hypergeometric Bernoulli numbers and hypergeometric Cauchy numbers, 
and as extensions of Bernoulli-Carlitz numbers and the Cauchy-Carlitz numbers. 
 These numbers can be expressed explicitly in terms of incomplete Stirling-Carlitz numbers.

\section{Preliminaries}  

For $N\ge 1$, define the truncated Bernoulli-Carlitz numbers $BC_{N,n}$ and the truncated Cauchy-Carlitz numbers $CC_{N,n}$ by 
\begin{equation}  
\frac{x^{r^N}/D_N}{e_C(x)-\sum_{i=0}^{N-1}x^{r^i}/D_i}=\sum_{n=0}^\infty\frac{BC_{N,n}}{\Pi(n)}x^n
\label{def:hgbercarlitz} 
\end{equation}  
and 
\begin{equation}  
\frac{(-1)^N x^{r^N}/L_N}{\log_C(x)-\sum_{i=0}^{N-1}(-1)^i x^{r^i}/L_i}=\sum_{n=0}^\infty\frac{CC_{N,n}}{\Pi(n)}x^n\,, 
\label{def:hgcaucarlitz} 
\end{equation}  
respectively.  
When $N=0$, $BC_n=BC_{0,n}$ and $CC_n=CC_{0,n}$ are the original Bernoulli-Carlitz numbers and Cauchy-Carlitz numbers, 
respectively.  As the concept of these definitions in (\ref{def:hgbercarlitz}) and (\ref{def:hgcaucarlitz} ) in function fields 
are the same as (\ref{def:hgbernoulli}) and (\ref{def:hgcauchy}) in complex numbers,     
the numbers $BC_{N,n}$ and $CC_{N,n}$ could be called the hypergeometric Bernoulli-Carlitz numbers and the 
hypergeometric Cauchy-Carlitz numbers, respectively. 
However, the generating functions of (\ref{def:hgbercarlitz}) and (\ref{def:hgcaucarlitz} ) are not related to the existing  
Carlitz hypergeometric functions (e.g., see \cite{Koc1,Thakur}).

\section{Hasse-Teichm\"uller derivatives}  

Let $\mathbb{F}$ be a field (of any characterstic), $\mathbb{F}((z))$ be the field of Laurent series in $z$, and $\mathbb{F}[[z]]$ be the ring of formal power series.   
The Hasse-Teichm\"uller derivative $H^{(n)}$ of order $n$ is defined by 
$$
H^{(n)}\left(\sum_{m=R}^{\infty} a_m z^m\right)
=\sum_{m=R}^{\infty} a_m \binom{m}{n}z^{m-n}
$$
for $\sum_{m=R}^{\infty} a_m z^m\in \mathbb{F}((z))$, 
where $R$ is an integer and $a_m\in\mathbb{F}$ for any $m\geq R$. 

The Hasse-Teichm\"uller derivatives satisfy the product rule \cite{Teich}, the quotient rule \cite{GN} and the chain rule \cite{Hasse}. 
One of the product rules can be described as follows.  
\begin{Lem}  
For $f_i\in\mathbb F[[z]]$ ($i=1,\dots,k$) with $k\ge 2$ and for $n\ge 1$, we have 
$$
H^{(n)}(f_1\cdots f_k)=\sum_{i_1,\dots,i_k\ge 0\atop i_1+\cdots+i_k=n}H^{(i_1)}(f_1)\cdots H^{(i_k)}(f_k)\,. 
$$ 
\label{productrule2}
\end{Lem} 

The quotient rules can be described as follows.  

\begin{Lem}  
For $f\in\mathbb F[[z]]\backslash \{0\}$ and $n\ge 1$,  
we have 
\begin{align} 
H^{(n)}\left(\frac{1}{f}\right)&=\sum_{k=1}^n\frac{(-1)^k}{f^{k+1}}\sum_{i_1,\dots,i_k\ge 1\atop i_1+\cdots+i_k=n}H^{(i_1)}(f)\cdots H^{(i_k)}(f)
\label{quotientrule1}\\ 
&=\sum_{k=1}^n\binom{n+1}{k+1}\frac{(-1)^k}{f^{k+1}}\sum_{i_1,\dots,i_k\ge 0\atop i_1+\cdots+i_k=n}H^{(i_1)}(f)\cdots H^{(i_k)}(f)\,.
\label{quotientrule2} 
\end{align}   
\label{quotientrules}
\end{Lem}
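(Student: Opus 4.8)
The plan is to establish the first identity (\ref{quotientrule1}) directly, and then to deduce the second identity (\ref{quotientrule2}) from it by a purely combinatorial regrouping. Throughout I abbreviate $g=1/f$.

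The starting point for (\ref{quotientrule1}) is the trivial relation $f\cdot g=1$. Applying $H^{(n)}$ with $n\ge 1$ and invoking the two-factor case of the product rule (Lemma \ref{productrule2}), the left-hand side vanishes because $H^{(n)}(1)=0$ for $n\ge 1$, giving
$$0=\sum_{a=0}^{n}H^{(a)}(f)\,H^{(n-a)}(g)\,.$$
Isolating the term $a=0$, where $H^{(0)}(f)=f$, yields the recurrence
$$H^{(n)}(g)=-\frac{1}{f}\sum_{a=1}^{n}H^{(a)}(f)\,H^{(n-a)}(g)\,.$$
I would then prove (\ref{quotientrule1}) by induction on $n$. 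The base case $n=1$ falls straight out of the recurrence (with $H^{(0)}(g)=1/f$), matching the $k=1$ term. For the inductive step I would substitute the formula for $H^{(m)}(g)$ with $1\le m<n$ into the recurrence, handle the terminal index $a=n$ (where $H^{(0)}(g)=1/f$) separately as the $k=1$ contribution, and reindex: adjoining the extra part $i_{k+1}=a$ to a composition of $n-a$ into $k$ positive parts produces precisely the compositions of $n$ into $k+1$ positive parts. Tracking the sign $-\frac{1}{f}\cdot(-1)^k=(-1)^{k+1}$ reassembles the right-hand side of (\ref{quotientrule1}).

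To pass from (\ref{quotientrule1}) to (\ref{quotientrule2}) I would rewrite the weak-composition sums (indices $\ge 0$) of (\ref{quotientrule2}) in terms of the strict-composition sums (indices $\ge 1$) of (\ref{quotientrule1}). Writing $S_\ell=\sum_{i_1,\dots,i_\ell\ge 1\atop i_1+\cdots+i_\ell=n}H^{(i_1)}(f)\cdots H^{(i_\ell)}(f)$ and using $H^{(0)}(f)=f$, a weak composition of $n$ into $k$ parts with exactly $\ell$ positive entries contributes $\binom{k}{\ell}f^{k-\ell}$ times a strict composition into $\ell$ parts, so
$$\sum_{i_1,\dots,i_k\ge 0\atop i_1+\cdots+i_k=n}H^{(i_1)}(f)\cdots H^{(i_k)}(f)=\sum_{\ell=1}^{k}\binom{k}{\ell}f^{k-\ell}S_\ell\,.$$
Substituting this into the right-hand side of (\ref{quotientrule2}) and interchanging the order of summation, the coefficient of $S_\ell/f^{\ell+1}$ becomes $\sum_{k=\ell}^{n}(-1)^k\binom{n+1}{k+1}\binom{k}{\ell}$.

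The crux is therefore the binomial identity
$$\sum_{k=\ell}^{n}(-1)^k\binom{n+1}{k+1}\binom{k}{\ell}=(-1)^\ell\,,$$
after which the whole expression collapses to $\sum_{\ell=1}^{n}(-1)^\ell S_\ell/f^{\ell+1}$, which is exactly the right-hand side of (\ref{quotientrule1}). I expect this identity to be the main (though still routine) obstacle. I would dispatch it by the generating-function computation $\sum_{k}\binom{n+1}{k+1}(-1)^k(1+x)^k=\frac{1-(-x)^{n+1}}{1+x}=\sum_{j=0}^{n}(-x)^j$, obtained by reindexing $m=k+1$ and applying the binomial theorem, and then extracting the coefficient of $x^\ell$ through $\binom{k}{\ell}=[x^\ell](1+x)^k$, which returns $(-1)^\ell$ for $0\le\ell\le n$. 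Combining the two parts completes the proof of Lemma \ref{quotientrules}.
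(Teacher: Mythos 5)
Your proof is correct. Note that the paper does not actually prove this lemma---it simply quotes the quotient rules from G\"ottfert and Niederreiter \cite{GN}---so there is no internal argument to compare against; what you have written is a valid, self-contained derivation. Both halves check out: the recurrence $H^{(n)}(1/f)=-f^{-1}\sum_{a=1}^{n}H^{(a)}(f)\,H^{(n-a)}(1/f)$, obtained by applying the two-factor product rule to $f\cdot(1/f)=1$ and using $H^{(n)}(1)=0$ for $n\ge 1$, yields (\ref{quotientrule1}) by induction exactly as you describe, and your reduction of (\ref{quotientrule2}) to (\ref{quotientrule1}) hinges on the identity $\sum_{k=\ell}^{n}(-1)^k\binom{n+1}{k+1}\binom{k}{\ell}=(-1)^{\ell}$ for $0\le\ell\le n$, whose generating-function verification via $\sum_{k}\binom{n+1}{k+1}(-1)^k(1+x)^k=\bigl(1-(-x)^{n+1}\bigr)/(1+x)=\sum_{j=0}^{n}(-x)^j$ is sound. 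The only point worth flagging is that the lemma allows any nonzero $f\in\mathbb F[[z]]$, so if $f(0)=0$ then $1/f$ lies in $\mathbb F((z))$ rather than $\mathbb F[[z]]$, and you are implicitly invoking the product rule for Laurent series rather than the version stated in Lemma \ref{productrule2} for power series; the Leibniz formula for Hasse--Teichm\"uller derivatives does hold on all of $\mathbb F((z))$, so this is harmless, but one sentence acknowledging it would make the argument airtight.
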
  

By using the Hasse-Teichm\"uller derivative of order $n$, we shall obtain some explicit expressions of the hypergeometric Bernoulli-Carlitz numbers $BC_{N,n}$ and hypergeometric Cauchy numbers $CC_{N,n}$, respectively.

\begin{theorem}  
For $n\ge 1$, 
$$
BC_{N,n}=\Pi(n)\sum_{k=1}^{n}(-D_N)^k\sum_{i_1,\dots,i_k\ge 1\atop r^{N+i_1}+\cdots+r^{N+i_k}=n+k r^N}\frac{1}{D_{N+i_1}\cdots D_{N+i_k}}\,.
$$ 
\label{th_hgbernoullicarlitz1} 
\end{theorem}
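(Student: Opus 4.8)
The plan is to extract the coefficient of $x^n$ from the defining generating function (\ref{def:hgbercarlitz}) using the Hasse-Teichm\"uller derivative, and then to apply the quotient rule of Lemma \ref{quotientrules}. First I would put the left-hand side of (\ref{def:hgbercarlitz}) into a shape suited to that rule. Since $e_C(x)=\sum_{i=0}^\infty x^{r^i}/D_i$, the denominator equals $\sum_{i=N}^\infty x^{r^i}/D_i$, and factoring out the lowest term $x^{r^N}/D_N$ gives
$$
\frac{x^{r^N}/D_N}{e_C(x)-\sum_{i=0}^{N-1}x^{r^i}/D_i}=\frac{1}{g(x)},\qquad g(x)=1+\sum_{i=N+1}^\infty\frac{D_N}{D_i}x^{r^i-r^N}.
$$
The crucial feature is that $g(0)=1$, because every exponent $r^i-r^N$ with $i>N$ is strictly positive.

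Next I would use the fact that for a power series $F(x)=\sum_m a_m x^m$ the constant term of $H^{(n)}(F)$ equals $a_n$ (the surviving term has $m=n$ and contributes $\binom{n}{n}=1$, valid in any characteristic); hence $BC_{N,n}/\Pi(n)=H^{(n)}(1/g)\big|_{x=0}$. Applying (\ref{quotientrule1}) yields
$$
H^{(n)}\!\left(\frac{1}{g}\right)=\sum_{k=1}^n\frac{(-1)^k}{g^{k+1}}\sum_{i_1,\dots,i_k\ge 1\atop i_1+\cdots+i_k=n}H^{(i_1)}(g)\cdots H^{(i_k)}(g),
$$
and, on evaluating at $x=0$ and using $g(0)=1$, the factor $1/g^{k+1}$ collapses to $1$. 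Everything therefore reduces to computing $H^{(j)}(g)\big|_{x=0}$ for $j\ge 1$, which by the coefficient interpretation is simply $[x^j]g(x)$.

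The heart of the argument is the evaluation of these coefficients. From $g(x)=1+\sum_{i\ge N+1}(D_N/D_i)x^{r^i-r^N}$ we see that $[x^j]g(x)$ is nonzero only when $j=r^{N+m}-r^N$ for some $m\ge 1$, in which case it equals $D_N/D_{N+m}$. Substituting $i_\ell=r^{N+m_\ell}-r^N$ in each factor converts the constraint $i_1+\cdots+i_k=n$ into $r^{N+m_1}+\cdots+r^{N+m_k}=n+kr^N$, while the product of the $k$ coefficients becomes $D_N^k/(D_{N+m_1}\cdots D_{N+m_k})$. Absorbing $(-1)^k$ into $D_N^k$ gives $(-D_N)^k$, and relabelling $m_\ell$ as $i_\ell$ produces exactly the stated formula. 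The main obstacle I anticipate is purely bookkeeping: tracking the change of summation variables $i_\ell\mapsto r^{N+i_\ell}-r^N$ so that the Hasse-Teichm\"uller indices and the exponent indices of the theorem are matched correctly. There is no deeper analytic difficulty, since the normalization $g(0)=1$ removes all the potential complications coming from the powers $1/g^{k+1}$ in the quotient rule.
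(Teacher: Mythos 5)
Your proposal is correct and follows essentially the same route as the paper: the paper likewise normalizes the denominator to $h=\sum_{j\ge 0}\frac{D_N}{D_{N+j}}x^{r^{N+j}-r^N}$ (your $g$, with $h(0)=1$), applies the quotient rule (\ref{quotientrule1}), and uses that $H^{(e)}(h)\big|_{x=0}$ is $D_N/D_{N+i}$ precisely when $e=r^{N+i}-r^N$ with $i\ge 1$ and $0$ otherwise, before performing the same change of summation indices. No gaps; the bookkeeping you anticipate is exactly what the paper carries out.
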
  

\noindent 
{\it Remark.}  
It is clear that $BC_{N,n}=0$ if $r\nmid n$ or $r^N(r-1)>n$.  
When $N=0$, we have 
$$
BC_n=\Pi(n)\sum_{k=1}^{n}(-1)^k\sum_{i_1,\dots,i_k\ge 1\atop r^{i_1}+\cdots+r^{i_k}=n+k}\frac{1}{D_{i_1}\cdots D_{i_k}}\,
$$ 
which is Theorem 4.2 in \cite{JKS}.  

\begin{proof}  
Put 
$$
h:=\dfrac{\sum_{i=N}^\infty\dfrac{x^{r^i}}{D_i}}{\dfrac{x^{r^N}}{D_N}}=\sum_{j=0}^\infty\frac{D_N}{D_{N+j}}x^{r^{N+j}-r^N}\,. 
$$ 
Note that 
\begin{align*} 
\left. H^{(e)}(h)\right|_{x=0}&=\left.\sum_{j=0}^\infty\frac{D_N}{D_{N+j}}\binom{r^{N+j}-r^N}{e}x^{r^{N+j}-r^N-e}\right|_{x=0}\\
&=\begin{cases} 
\dfrac{D_N}{D_{N+i}}&\text{if $e=r^{N+i}-r^N$};\\ 
0&\text{otherwise}. 
\end{cases} 
\end{align*}  
Hence, by using Lemma \ref{quotientrules} (\ref{quotientrule1}), we have 
\begin{align*} 
\frac{BC_{N,n}}{\Pi(n)}&=\left.H^{(n)}\left(\frac{1}{h}\right)\right|_{x=0}\\
&=\sum_{k=1}^n\left.\frac{(-1)^k}{h^{k+1}}\right|_{x=0}\sum_{e_1,\dots,e_k\ge 1\atop e_1+\cdots+e_k=n}\left.H^{(e_1)}(h)\right|_{x=0}\cdots\left.H^{(e_k)}(h)\right|_{x=0}\\
&=\sum_{k=1}^n(-1)^k\sum_{i_1,\dots,i_k\ge 1\atop r^{N+i_1}+\cdots+r^{N+i_k}=n+k r^N}\frac{D_N}{D_{N+i_1}}\cdots\frac{D_N}{D_{N+i_k}}\\
&=\sum_{k=1}^n(-D_N)^k\sum_{i_1,\dots,i_k\ge 1\atop r^{N+i_1}+\cdots+r^{N+i_k}=n+k r^N}\frac{1}{D_{N+i_1}\cdots D_{N+i_k}}\,. 
\end{align*} 
\end{proof}   

\noindent 
{\bf Examples.} 
Let $r=3$ and $N=2$.  Then $BC_{2,n}=0$ if $18\nmid n$.  
When $n=18$, consider the set 
$$
S_k=\{(i_1,\dots,i_k)|i_1,\dots,i_k\ge 1,~3^{i_1+2}+\cdots+3^{i_k+2}=18+9 k\}\,.
$$ 
Then $S_1=\{(1)\}$, and $S_k$ is empty when $k\ge 2$ because $3^{i_1+2}+3^{i_2+2}\ge 54>36$.  
Hence, we obtain 
$$
BC_{2,18}=\Pi(18)(-D_2)\frac{1}{D_3}\,. 
$$ 
When $n=36$, consider the set 
$$
S_k=\{(i_1,\dots,i_k)|i_1,\dots,i_k\ge 1,~3^{i_1+2}+\cdots+3^{i_k+2}=36+9 k\}\,.
$$ 
Then, $S_k$ ($k=1$, $k\ge 3$) are empty because $3^{i_1+2}+3^{i_2+2}+3^{i_3+2}\ge 81>63$.  By $S_2=\{(1,1)\}$, we have 
$$
BC_{2,36}=\Pi(36)(-D_2)^2\frac{1}{D_3 D_3}=\Pi(36)\frac{D_2^2}{D_3^2}\,. 
$$ 
When $n=72$, consider the set 
$$
S_k=\{(i_1,\dots,i_k)|i_1,\dots,i_k\ge 1,~3^{i_1+2}+\cdots+3^{i_k+2}=72+9 k\}\,.
$$ 
Since $S_k$ is empty for $k=2,3$ and $k\ge 5$ and $S_1=\{(2)\}$ and $S_4=\{(1,1,1,1)\}$, we have 
$$
BC_{2,72}=\Pi(72)\left(\frac{-D_2}{D_4}+\frac{D_2^4}{D_3 D_3 D_3 D_3}\right)\,. 
$$ 
In fact, 
\begin{align*} 
\sum_{n=0}^\infty\frac{BC_{2,n}}{\Pi(n)}x^n&=\dfrac{\dfrac{x^9}{D_2}}{\sum_{i=2}^\infty\dfrac{x^{3^i}}{D_i}}\\
&=1-\frac{D_2}{D_3}x^{18}+\frac{D_2^2}{D_3^2}x^{36}-\frac{D_2^3}{D_3^3}x^{54}\\
&\quad+\left(\frac{D_2^4}{D_3^4}-\frac{D_2}{D_4}\right)x^{72}-\left(\frac{D_2^5}{D_3^5}-\frac{2 D_2^2}{D_3 D_4}\right)x^{72}+\cdots\,. 
\end{align*} 
\bigskip

We can express the hypergeometric Bernoulli-Carlitz numbers in terms of the binomial coefficients too. 
By using Lemma \ref{quotientrules} (\ref{quotientrule2}) 
instead of Lemma \ref{quotientrules} (\ref{quotientrule1}) 
in the proof of Theorem \ref{th_hgbernoullicarlitz1}, we obtain the following:  

\begin{Prop}  
For $n\ge 1$, 
$$
BC_{N,n}=\Pi(n)\sum_{k=1}^{n}\binom{n+1}{k+1}(-D_N)^k\sum_{i_1,\dots,i_k\ge 0\atop r^{N+i_1}+\cdots+r^{N+i_k}=n+k r^N}\frac{1}{D_{N+i_1}\cdots D_{N+i_k}}\,.
$$ 
\label{th_hgbernoullicarlitz2} 
\end{Prop}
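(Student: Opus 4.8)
The plan is to follow the proof of Theorem~\ref{th_hgbernoullicarlitz1} verbatim up to the point where the quotient rule is invoked, and then substitute Lemma~\ref{quotientrules}~(\ref{quotientrule2}) for Lemma~\ref{quotientrules}~(\ref{quotientrule1}). Concretely, I would reuse the same auxiliary series
$$
h=\frac{\sum_{i=N}^\infty x^{r^i}/D_i}{x^{r^N}/D_N}=\sum_{j=0}^\infty\frac{D_N}{D_{N+j}}x^{r^{N+j}-r^N},
$$
together with the already-established evaluation $\left.H^{(e)}(h)\right|_{x=0}=D_N/D_{N+i}$ when $e=r^{N+i}-r^N$ and $0$ otherwise, and the identity $BC_{N,n}/\Pi(n)=\left.H^{(n)}(1/h)\right|_{x=0}$ coming from (\ref{def:hgbercarlitz}). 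The one additional observation needed is that this evaluation is also valid at $i=0$: the $j=0$ term of $h$ is the constant $1$, so $\left.H^{(0)}(h)\right|_{x=0}=h(0)=1=D_N/D_{N+0}$, and moreover $h(0)=1$ forces $\left.h^{-(k+1)}\right|_{x=0}=1$ for every $k$.

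With these facts in hand, I would expand $\left.H^{(n)}(1/h)\right|_{x=0}$ using (\ref{quotientrule2}). Evaluating at $x=0$ kills the factor $h^{-(k+1)}$, since it equals $1$, and reduces the inner sum over $e_1,\dots,e_k\ge 0$ with $e_1+\cdots+e_k=n$ to those tuples for which each $e_j=r^{N+i_j}-r^N$ for some $i_j\ge 0$. Translating the constraint $e_1+\cdots+e_k=n$ through this substitution yields exactly $r^{N+i_1}+\cdots+r^{N+i_k}=n+k r^N$, while the product of the surviving factors contributes $D_N^k/(D_{N+i_1}\cdots D_{N+i_k})$. Carrying the binomial coefficient $\binom{n+1}{k+1}$ and the sign $(-1)^k$ along from (\ref{quotientrule2}) and combining $D_N^k$ with $(-1)^k$ into $(-D_N)^k$ then gives the stated formula.

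The essential point of departure from Theorem~\ref{th_hgbernoullicarlitz1} — and the only place where any care is required — is the range of the indices $i_j$. In the first quotient rule the inner summation demands $e_j\ge 1$, which, since $e_j=r^{N+i_j}-r^N$, forces $i_j\ge 1$; in the second quotient rule the summation allows $e_j\ge 0$, and the value $e_j=0$ corresponds precisely to $i_j=0$, whose contribution $\left.H^{(0)}(h)\right|_{x=0}=1$ is nonzero. Thus the lower bound on the $i_j$ relaxes from $1$ to $0$, which is exactly the difference between the two statements. I do not expect any genuine obstacle here: the verification that $h(0)=1$ and that the $i_j=0$ term fits the general pattern $D_N/D_{N+i}$ is the single substantive check, and it is immediate.
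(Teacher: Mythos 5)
Your proposal is correct and follows exactly the route the paper intends: the paper itself derives Proposition~\ref{th_hgbernoullicarlitz2} by rerunning the proof of Theorem~\ref{th_hgbernoullicarlitz1} with Lemma~\ref{quotientrules}~(\ref{quotientrule2}) in place of (\ref{quotientrule1}). Your added check that the $e_j=0$ term corresponds to $i_j=0$ with $\left.H^{(0)}(h)\right|_{x=0}=1=D_N/D_{N+0}$ is precisely the one point that changes, and you handle it correctly.
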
 

\noindent 
{\it Remark.}  
When $N=0$, we have 
$$
BC_n=\Pi(n)\sum_{k=1}^{n}\binom{n+1}{k+1}(-1)^k\sum_{i_1,\dots,i_k\ge 0\atop r^{i_1}+\cdots+r^{i_k}=n+k}\frac{1}{D_{i_1}\cdots D_{i_k}}\,
$$ 
which is Proposition 4.4 in \cite{JKS}.  

\noindent 
{\bf Example.}  
Let $r=3$ and $N=2$. When $n=18$, consider the set 
$$
S_k=\{(i_1,\dots,i_k)|i_1,\dots,i_k\ge 0,~3^{i_1+2}+\cdots+3^{i_k+2}=18+9 k\}\,.
$$ 
Since $S_1=\{(1)\}$, $S_2=\{(0,1),(1,0)\}$, $S_3=\{(0,0,1),(0,1,0),(1,0,0)\}$, $\dots$, $S_{18}=\{(\underbrace{0,\dots,0}_{17},1),\dots,(1,\underbrace{0,\dots,0}_{17})\}$, we have 
\begin{align*} 
\frac{BC_{2,18}}{\Pi(18)}&=-\binom{19}{2}\frac{D_2}{D_3}+\binom{19}{3}\frac{2 D_2^2}{D_2 D_3}-\binom{19}{4}\frac{3 D_2^3}{D_2^2 D_3}+\cdots+\binom{19}{19}\frac{18 D_2^{18}}{D_2^{17}D_3}\\
&=\frac{D_2}{D_3}\sum_{k=1}^{18}(-1)^k\binom{19}{k+1}k\\
&=\frac{D_2}{D_3}\left(\sum_{k=0}^{19}(-1)^{k-1}\binom{19}{k}(k-1)-1\right)\\
&=\frac{D_2}{D_3}\left(\sum_{k=0}^{19}(-1)^{k-1}\binom{19}{k}k+\sum_{k=0}^{19}(-1)^{k}\binom{19}{k}-1\right)\\
&=-\frac{D_2}{D_3}\,. 
\end{align*} 
\bigskip

Next, we shall give an explicit formula for hypergeometric Cauchy-Carlitz numbers. 

\begin{theorem}  
For $n\ge 1$, 
$$
CC_{N,n}=\Pi(n)\sum_{k=1}^{n}(-L_N)^k\sum_{i_1,\dots,i_k\ge 1\atop r^{N+i_1}+\cdots+r^{N+i_k}=n+k r^N}\frac{(-1)^{i_1+\cdots+i_k}}{L_{N+i_1}\cdots L_{N+i_k}}\,.
$$ 
\label{th_hgcauchycarlitz1} 
\end{theorem}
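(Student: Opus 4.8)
The plan is to mimic the proof of Theorem~\ref{th_hgbernoullicarlitz1} line by line, with $e_C$ and the $D_i$ replaced by $\log_C$ and the $L_i$; the only genuinely new ingredient is the alternating sign $(-1)^i$ carried by the Carlitz logarithm in (\ref{carlitzlog}). First I would use the identity $\log_C(x)-\sum_{i=0}^{N-1}(-1)^i x^{r^i}/L_i=\sum_{i=N}^\infty(-1)^i x^{r^i}/L_i$ to put the defining relation (\ref{def:hgcaucarlitz}) into the shape $1/h$, where
$$
h:=\frac{\sum_{i=N}^\infty(-1)^i x^{r^i}/L_i}{(-1)^N x^{r^N}/L_N}=\sum_{j=0}^\infty(-1)^j\frac{L_N}{L_{N+j}}x^{r^{N+j}-r^N}\,.
$$
Thus $CC_{N,n}/\Pi(n)=\left.H^{(n)}(1/h)\right|_{x=0}$, and note in particular that $h|_{x=0}=1$ from the $j=0$ term.

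Next I would compute the boundary values of the Hasse--Teichm\"uller derivatives of $h$. Since
$$
H^{(e)}(h)=\sum_{j=0}^\infty(-1)^j\frac{L_N}{L_{N+j}}\binom{r^{N+j}-r^N}{e}x^{r^{N+j}-r^N-e}\,,
$$
evaluation at $x=0$ survives only when $e=r^{N+i}-r^N$ for some $i\ge1$, where the binomial coefficient is $\binom{r^{N+i}-r^N}{r^{N+i}-r^N}=1$, giving $\left.H^{(e)}(h)\right|_{x=0}=(-1)^i L_N/L_{N+i}$, and $0$ otherwise. Together with $\left.h^{-(k+1)}\right|_{x=0}=1$, this is the exact analogue of the corresponding computation in Theorem~\ref{th_hgbernoullicarlitz1}.

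Then I would feed these values into Lemma~\ref{quotientrules}~(\ref{quotientrule1}) applied to $1/h$ and set $x=0$. In the inner sum each $e_\ell$ is forced to be $r^{N+i_\ell}-r^N$ with $i_\ell\ge1$, so the condition $e_1+\cdots+e_k=n$ turns into $r^{N+i_1}+\cdots+r^{N+i_k}=n+kr^N$, while the product of the $k$ boundary values contributes $(-1)^{i_1+\cdots+i_k}L_N^k/(L_{N+i_1}\cdots L_{N+i_k})$. Combining this with the outer factor $(-1)^k$ produces $(-L_N)^k$ together with the exponent-dependent sign $(-1)^{i_1+\cdots+i_k}$, and multiplying by $\Pi(n)$ yields precisely the claimed identity.

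Because every step transcribes the argument for $BC_{N,n}$, there is no real obstacle; the single point requiring attention is keeping the two independent sign sources apart---the global $(-L_N)^k$ arising from the quotient rule and the index-dependent $(-1)^{i_1+\cdots+i_k}$ arising from (\ref{carlitzlog})---so that they are not accidentally merged. As in the Bernoulli--Carlitz case, the constraint $i_\ell\ge1$ forces $n=\sum_\ell(r^{N+i_\ell}-r^N)\ge k\,r^N(r-1)$, so the formula immediately gives $CC_{N,n}=0$ whenever $r^N(r-1)>n$.
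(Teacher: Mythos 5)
Your proposal is correct and follows essentially the same route as the paper: the same normalization $h=\sum_{j\ge0}(-1)^jL_N x^{r^{N+j}-r^N}/L_{N+j}$, the same evaluation of $\left.H^{(e)}(h)\right|_{x=0}$, and the same application of Lemma \ref{quotientrules} (\ref{quotientrule1}). The only additions are explicit remarks (e.g.\ $h|_{x=0}=1$ and the vanishing condition $r^N(r-1)>n$) that the paper leaves implicit or places in the subsequent remark.
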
  
\noindent 
{\it Remark.}  
It is clear that $CC_{N,n}=0$ if $r\nmid n$ or $r^N(r-1)>n$.  
When $N=1$, we have 
$$
CC_n=\Pi(n)\sum_{k=1}^{n}(-1)^k\sum_{i_1,\dots,i_k\ge 1\atop r^{i_1}+\cdots+r^{i_k}=n+k}\frac{(-1)^{i_1+\cdots+i_k}}{L_{i_1}\cdots L_{i_k}}\,
$$ 
which is Theorem 3 in \cite{KK}.  

\begin{proof}  
Put 
$$
h:=\dfrac{\sum_{i=N}^\infty(-1)^i\dfrac{x^{r^i}}{L_i}}{(-1)^N\dfrac{x^{r^N}}{L_N}}=\sum_{j=0}^\infty(-1)^j\frac{L_N}{L_{N+j}}x^{r^{N+j}-r^N}\,. 
$$ 
Note that 
\begin{align*} 
\left. H^{(e)}(h)\right|_{x=0}&=\left.\sum_{j=0}^\infty(-1)^j\frac{L_N}{L_{N+j}}\binom{r^{N+j}-r^N}{e}x^{r^{N+j}-r^N-e}\right|_{x=0}\\
&=\begin{cases} 
\dfrac{(-1)^i L_N}{L_{N+i}}&\text{if $e=r^{N+i}-r^N$};\\ 
0&\text{otherwise}. 
\end{cases} 
\end{align*}  
Hence, by using Lemma \ref{quotientrules} (\ref{quotientrule1}), we have 
\begin{align*} 
\frac{CC_{N,n}}{\Pi(n)}&=\left.H^{(n)}\left(\frac{1}{h}\right)\right|_{x=0}\\
&=\sum_{k=1}^n\left.\frac{(-1)^k}{h^{k+1}}\right|_{x=0}\sum_{e_1,\dots,e_k\ge 1\atop e_1+\cdots+e_k=n}\left.H^{(e_1)}(h)\right|_{x=0}\cdots\left.H^{(e_k)}(h)\right|_{x=0}\\
&=\sum_{k=1}^n(-1)^k\sum_{i_1,\dots,i_k\ge 1\atop r^{N+i_1}+\cdots+r^{N+i_k}=n+k r^N}\frac{(-1)^{i_1}L_N}{L_{N+i_1}}\cdots\frac{(-1)^{i_k}L_N}{L_{N+i_k}}\\
&=\sum_{k=1}^n(-L_N)^k\sum_{i_1,\dots,i_k\ge 1\atop r^{N+i_1}+\cdots+r^{N+i_k}=n+k r^N}\frac{(-1)^{i_1+\cdots+i_k}}{L_{N+i_1}\cdots L_{N+i_k}}\,. 
\end{align*} 
\end{proof}   

\noindent 
{\bf Example.} 
Let $r=3$ and $N=3$.  Then $CC_{3,n}=0$ if $54\nmid n$.  
When $n=270$, consider the set 
$$
S_k=\{(i_1,\dots,i_k)|i_1,\dots,i_k\ge 1,~3^{i_1+3}+\cdots+3^{i_k+3}=270+27 k\}\,.
$$ 
Then $S_2=\{(1,2),~(2,1)\}$, $S_5=\{(1,1,1,1,1)\}$ and $S_k$ is empty when $k=1,3,4$ and $k\ge 6$.  
Hence, we obtain 
\begin{align*}
\frac{CC_{3,270}}{\Pi(270)}&=\left((-L_3)^2\frac{(-1)^3\cdot 2}{L_4 L_5}+(-L_3)^5\frac{(-1)^5}{L_4^5}\right)\\
&=\frac{L_3^5}{L_4^5}-\frac{2 L_3^2}{L_4 L_5}\,. 
\end{align*} 
In fact, 
\begin{align*} 
\sum_{n=0}^\infty\frac{CC_{3,n}}{\Pi(n)}x^n&=\dfrac{-\dfrac{x^{27}}{L_3}}{\sum_{i=3}^\infty(-1)^i\dfrac{x^{3^i}}{L_i}}\\
&=1+\frac{L_3}{L_4}x^{54}+\frac{L_3^2}{L_4^2}x^{108}+\frac{L_3^3}{L_4^3}x^{162}\\
&\quad+\left(\frac{L_3^5}{L_4^5}-\frac{2 L_3^2}{L_4 L_5}\right)x^{270}+\left(\frac{L_3^6}{L_4^6}-\frac{3 L_3^3}{L_4^2 L_5}\right)x^{324}+\cdots\,. 
\end{align*} 
\bigskip

We can express the hypergeometric Cauchy numbers in terms of the binomial coefficients too. 
In fact, by using Lemma \ref{quotientrules} (\ref{quotientrule2}) 
instead of Lemma \ref{quotientrules} (\ref{quotientrule1}) 
in the proof of Theorem \ref{th_hgcauchycarlitz1}, we obtain the following:  

\begin{Prop}  
For $n\ge 1$, 
$$
CC_{N,n}=\Pi(n)\sum_{k=1}^{n}\binom{n+1}{k+1}(-L_N)^k\sum_{i_1,\dots,i_k\ge 0\atop r^{N+i_1}+\cdots+r^{N+i_k}=n+k r^N}\frac{(-1)^{i_1+\cdots+i_k}}{L_{N+i_1}\cdots L_{N+i_k}}\,.
$$ 
\label{th_hgcauchycarlitz2}  
\end{Prop}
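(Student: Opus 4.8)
The plan is to follow the proof of Theorem \ref{th_hgcauchycarlitz1} almost verbatim, replacing the first quotient rule (\ref{quotientrule1}) by the second one (\ref{quotientrule2}). First I would set
$$
h:=\dfrac{\sum_{i=N}^\infty(-1)^i\dfrac{x^{r^i}}{L_i}}{(-1)^N\dfrac{x^{r^N}}{L_N}}=\sum_{j=0}^\infty(-1)^j\frac{L_N}{L_{N+j}}x^{r^{N+j}-r^N}\,,
$$
so that the generating function in (\ref{def:hgcaucarlitz}) is exactly $1/h$ and hence $CC_{N,n}/\Pi(n)=H^{(n)}(1/h)|_{x=0}$. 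The same computation as before gives
$$
\left.H^{(e)}(h)\right|_{x=0}=\begin{cases}\dfrac{(-1)^i L_N}{L_{N+i}}&\text{if }e=r^{N+i}-r^N;\\0&\text{otherwise},\end{cases}
$$
now recorded for every $e\ge 0$; in particular $e=0$ forces $i=0$ and returns the value $1$, consistent with $H^{(0)}(h)|_{x=0}=h(0)=1$.

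The only genuine difference from Theorem \ref{th_hgcauchycarlitz1} is that formula (\ref{quotientrule2}) carries the extra binomial weight $\binom{n+1}{k+1}$ and, crucially, ranges over indices $e_1,\dots,e_k\ge 0$ rather than $\ge 1$. Applying it and evaluating at $x=0$, where $h(0)=1$ makes the prefactor $1/h^{k+1}|_{x=0}=1$, I would obtain
$$
\frac{CC_{N,n}}{\Pi(n)}=\sum_{k=1}^n\binom{n+1}{k+1}(-1)^k\sum_{e_1,\dots,e_k\ge 0\atop e_1+\cdots+e_k=n}\left.H^{(e_1)}(h)\right|_{x=0}\cdots\left.H^{(e_k)}(h)\right|_{x=0}\,.
$$
Each nonzero term forces $e_j=r^{N+i_j}-r^N$ with $i_j\ge 0$, and the constraint $e_1+\cdots+e_k=n$ translates into $r^{N+i_1}+\cdots+r^{N+i_k}=n+k r^N$, which is precisely the summation condition in the statement. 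Collecting the factors $(-1)^{i_j}L_N/L_{N+i_j}$ then yields $(-L_N)^k$ multiplied by $(-1)^{i_1+\cdots+i_k}/(L_{N+i_1}\cdots L_{N+i_k})$, giving the claimed identity.

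There is essentially no hard step here: the substance was already carried out in establishing the quotient rule (Lemma \ref{quotientrules}) and in Theorem \ref{th_hgcauchycarlitz1}. The one point that deserves a moment's care is the bookkeeping that the relaxed range $i_j\ge 0$ in (\ref{quotientrule2}) is matched exactly by allowing the $e=0$ case $H^{(0)}(h)|_{x=0}=1$, so that neither spurious nor missing terms arise; once this is verified, the formula follows mechanically.
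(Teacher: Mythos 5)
Your proof is correct and follows exactly the route the paper intends: the paper gives no separate argument for this Proposition, stating only that it follows by substituting the quotient rule (\ref{quotientrule2}) for (\ref{quotientrule1}) in the proof of Theorem \ref{th_hgcauchycarlitz1}, which is precisely what you carry out. Your added remark about the $e=0$ term contributing $H^{(0)}(h)|_{x=0}=h(0)=1$ is the right point to check and is handled correctly.
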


\section{Incomplete Stirling-Carlitz numbers}  

In \cite{KK}, as analogues of the Stirling numbers of the first kind $\stf{n}{k}$ defined by 
\begin{equation} 
\frac{\bigl(-\log(1-t)\bigr)^k}{k!}=\sum_{n=0}^\infty\stf{n}{k}\frac{t^n}{n!}\,,
\label{stf}  
\end{equation}
the Stirling-Carlitz numbers of the first kind  $\stf{n}{k}_C$ were introduced by 
\begin{equation}  
\frac{\bigl(\log_C(z)\bigr)^k}{\Pi(k)}=\sum_{n=0}^\infty\stf{n}{k}_C\frac{z^n}{\Pi(n)}\,. 
\label{stfcarlitz}  
\end{equation}  
As analogues of the Stirling numbers of the second kind $\sts{n}{k}$ defined by
$$
\frac{(e^t-1)^k}{k!}=\sum_{n=0}^\infty\sts{n}{k}\frac{t^n}{n!}\,,
$$ 
the Stirling-Carlitz numbers of the second kind $\sts{n}{k}_C$ were introduced by 
\begin{equation}  
\frac{\bigl(e_C(z)\bigr)^k}{\Pi(k)}=\sum_{n=0}^\infty\sts{n}{k}_C\frac{z^n}{\Pi(n)}\,. 
\label{stscarlitz}  
\end{equation}  
By the definition (\ref{stfcarlitz}), we have 
\begin{equation}
\stf{n}{0}_C=0\quad(n\ge 1),\quad \stf{n}{m}_C=0\quad(n<m)\quad  
\hbox{and}\quad \stf{n}{n}_C=1\quad(n\ge 0) 
\label{eqn:aabb} 
\end{equation}
and 
\begin{equation}
\sts{n}{0}_C=0\quad(n\ge 1),\quad \sts{n}{m}_C=0\quad(n<m)\quad  
\hbox{and}\quad \sts{n}{n}_C=1\quad(n\ge 0)\,.
\label{eqn:aacc} 
\end{equation}

On the other hand, in \cite{Cha, Ko6, KLM, KMS}, so-called incomplete Stirling numbers of the fist kind and of the second kind were introduced as some generalizations of the classical Stirling numbers of the fist kind and of the second kind.  One of the incomplete Stirling numbers is {\it restricted Stirling number}, and another is {\it associated Stirling number}.  
Associated Stirling numbers of the second kind $\sts{n}{k}_{\ge m}$ are given by
\begin{equation} 
\frac{\bigl(e^x-E_{m-1}(x)\bigr)^k}{k!}=\sum_{n=0}^\infty\sts{n}{k}_{\ge m}\frac{x^n}{n!}\quad(m\ge 1)\,, 
\label{assoc.sts} 
\end{equation} 
where 
$$
E_m(x)=\sum_{n=0}^m\frac{x^n}{n!}\,. 
$$     
When $m=1$, $\sts{n}{k}=\sts{n}{k}_{\ge 1}$ is the classical Stirling numbers of the second kind.  
Restricted Stirling numbers of the second kind $\sts{n}{k}_{\ge m}$ are given by
\begin{equation} 
\frac{\bigl(E_{m}(x)-1\bigr)^k}{k!}=\sum_{n=0}^\infty\sts{n}{k}_{\le m}\frac{x^n}{n!}\quad(m\ge 1)\,. 
\label{rest.sts} 
\end{equation} 
When $m\to\infty$, $\sts{n}{k}=\sts{n}{k}_{\le\infty}$ is the classical Stirling numbers of the second kind. 

Associated Stirling numbers of the first kind $\stf{n}{k}_{\ge m}$ are given by
\begin{equation} 
\frac{\bigl(-\log(1-x)+F_{m-1}(-x)\bigr)^k}{k!}=\sum_{n=0}^\infty\stf{n}{k}_{\ge m}\frac{x^n}{n!}\quad(m\ge 1)\,, 
\label{assoc.stf} 
\end{equation} 
where 
$$
F_m(t)=\sum_{k=1}^m(-1)^{k+1}\frac{t^k}{k}\,. 
$$     
When $m=1$, $\stf{n}{k}=\stf{n}{k}_{\ge 1}$ is the classical Stirling numbers of the first kind.  
Restricted Stirling numbers of the first kind $\stf{n}{k}_{\ge m}$ are given by
\begin{equation} 
\frac{\bigl(-F_m(-x)\bigr)^k}{k!}=\sum_{n=0}^\infty\stf{n}{k}_{\le m}\frac{x^n}{n!}\quad(m\ge 1)\,. 
\label{rest.stf} 
\end{equation} 
When $m\to\infty$, $\stf{n}{k}=\stf{n}{k}_{\le\infty}$ is the classical Stirling numbers of the first kind.

Now, we introduce {\it associated Stirling-Carlitz numbers} and {\it restricted Stirling-Carlitz numbers}.  
The partial sum of the Carlitz exponential is denoted by  
$$
\mathcal E_m(x)=\sum_{i=0}^m\frac{x^{r^i}}{D_i}\,. 
$$
The associated Stirling-Carlitz numbers of the second kind  $\sts{n}{k}_{C, \ge m}$ are defined by 
\begin{equation}  
\frac{\bigl(e_C(z)-\mathcal E_{m-1}(z)\bigr)^k}{\Pi(k)}=\sum_{n=0}^\infty\sts{n}{k}_{C,\ge m}\frac{z^n}{\Pi(n)}\,. 
\label{assocstscarlitz}  
\end{equation}  
The restricted Stirling-Carlitz numbers of the second kind  $\sts{n}{k}_{C,\le m}$ are defined by 
\begin{equation}  
\frac{\bigl(\mathcal E_m(z)\bigr)^k}{\Pi(k)}=\sum_{n=0}^\infty\sts{n}{k}_{C,\le m}\frac{z^n}{\Pi(n)}\,. 
\label{reststscarlitz}  
\end{equation}  
When $m=0$ in (\ref{assocstscarlitz}) or $m\to\infty$ in (\ref{reststscarlitz}), $\sts{n}{k}_{C}=\sts{n}{k}_{C, \ge 0}=\sts{n}{k}_{C, \le\infty}$ is the original Stirling-Carlitz number of the second kind.  
The partial sum of the Carlitz logarithm is denoted by 
$$
\mathcal F_m(x)=\sum_{i=0}^m(-1)^i\frac{x^{r^i}}{L_i}\,. 
$$
The associated Stirling-Carlitz numbers of the first kind  $\stf{n}{k}_{C, \ge m}$ are defined by 
\begin{equation}  
\frac{\bigl(\log_C(z)-\mathcal F_{m-1}(z)\bigr)^k}{\Pi(k)}=\sum_{n=0}^\infty\stf{n}{k}_{C,\ge m}\frac{z^n}{\Pi(n)}\,. 
\label{assocstfcarlitz}  
\end{equation}  
The restricted Stirling-Carlitz numbers of the first kind  $\stf{n}{k}_{C,\le m}$ are defined by 
\begin{equation}  
\frac{\bigl(\mathcal F_m(z)\bigr)^k}{\Pi(k)}=\sum_{n=0}^\infty\stf{n}{k}_{C,\le m}\frac{z^n}{\Pi(n)}\,. 
\label{reststfcarlitz}  
\end{equation}  
When $m=0$ in (\ref{assocstfcarlitz}) or $m\to\infty$ in (\ref{reststfcarlitz}), $\stf{n}{k}_{C}=\stf{n}{k}_{C, \ge 0}=\stf{n}{k}_{C, \le\infty}$ is the original Stirling-Carlitz number of the first kind.  
\bigskip

Due to associated Stirling-Carlitz numbers of the second kind in (\ref{assocstscarlitz}), we can obtain a more explicit expression of hypergeometric Bernoulli-Carlitz numbers, expressed in Theorem \ref{th_hgbernoullicarlitz1} or Proposition \ref{th_hgbernoullicarlitz2}.

\begin{theorem} 
For $N\ge 1$ and $n\ge 1$, we have  
$$
BC_{N,n}=
\Pi(n)\sum_{k=1}^{n}\binom{n+1}{k+1}\frac{(-D_N)^k\Pi(k)}{\Pi(n+k r^N)}\sts{n+k r^N}{k}_{C,\ge N}\,. 
$$ 
\label{th27} 
\end{theorem}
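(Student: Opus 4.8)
The plan is to reuse the auxiliary series $h$ from the proof of Theorem \ref{th_hgbernoullicarlitz1}, but now to feed it through the second quotient rule (\ref{quotientrule2}) instead of (\ref{quotientrule1}), and then to recognize the power $h^k$ as a shifted generating function for associated Stirling-Carlitz numbers of the second kind. Concretely, since $e_C(x)-\sum_{i=0}^{N-1}x^{r^i}/D_i=e_C(x)-\mathcal E_{N-1}(x)$, I would set
$$
h=\frac{e_C(x)-\mathcal E_{N-1}(x)}{x^{r^N}/D_N}=\sum_{j=0}^\infty\frac{D_N}{D_{N+j}}x^{r^{N+j}-r^N},
$$
so that $BC_{N,n}/\Pi(n)=H^{(n)}(1/h)\big|_{x=0}$ exactly as before, and the $j=0$ term shows $h\big|_{x=0}=1$. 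Applying (\ref{quotientrule2}) gives
$$
H^{(n)}\!\left(\frac1h\right)=\sum_{k=1}^n\binom{n+1}{k+1}\frac{(-1)^k}{h^{k+1}}\sum_{i_1,\dots,i_k\ge 0\atop i_1+\cdots+i_k=n}H^{(i_1)}(h)\cdots H^{(i_k)}(h).
$$

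The next step is to collapse the inner sum: by the product rule of Lemma \ref{productrule2} applied with $f_1=\cdots=f_k=h$, the inner sum is precisely $H^{(n)}(h^k)$. Evaluating at $x=0$ and using $h\big|_{x=0}=1$ (hence $h^{-(k+1)}\big|_{x=0}=1$), the formula reduces to
$$
\frac{BC_{N,n}}{\Pi(n)}=\sum_{k=1}^n\binom{n+1}{k+1}(-1)^k\,H^{(n)}(h^k)\big|_{x=0}.
$$
It then remains to identify $H^{(n)}(h^k)\big|_{x=0}$, which equals the coefficient of $x^n$ in the power series $h^k$. Here I would use $h=\frac{D_N}{x^{r^N}}\bigl(e_C(x)-\mathcal E_{N-1}(x)\bigr)$, whence $h^k=\frac{D_N^k}{x^{k r^N}}\bigl(e_C(x)-\mathcal E_{N-1}(x)\bigr)^k$, and substitute the defining expansion (\ref{assocstscarlitz}) of $\sts{n}{k}_{C,\ge N}$ (the case $m=N$) to obtain
$$
h^k=D_N^k\,\Pi(k)\sum_{m=0}^\infty\sts{m}{k}_{C,\ge N}\frac{x^{m-k r^N}}{\Pi(m)}.
$$

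Finally, because $\bigl(e_C(x)-\mathcal E_{N-1}(x)\bigr)^k$ has lowest term $x^{k r^N}$, we have $\sts{m}{k}_{C,\ge N}=0$ for $m<k r^N$, so $h^k$ is a genuine power series and its $x^n$-coefficient is read off by setting $m=n+k r^N$, giving $D_N^k\Pi(k)\sts{n+k r^N}{k}_{C,\ge N}/\Pi(n+k r^N)$. Substituting this and combining $(-1)^kD_N^k=(-D_N)^k$ yields the claimed identity. I expect the only real subtlety to be this index bookkeeping: the mismatch between $h^k$, whose series starts at $x^0$ with constant term $1$, and the associated Stirling-Carlitz generating function, whose series starts at $x^{k r^N}$. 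The shift $m\mapsto n+k r^N$ must be tracked carefully, and the vanishing of $\sts{m}{k}_{C,\ge N}$ below $x^{k r^N}$ must be invoked to justify that taking the $n$-th Hasse-Teichm\"uller coefficient is legitimate; everything else is a direct substitution mirroring Proposition \ref{th_hgbernoullicarlitz2}.
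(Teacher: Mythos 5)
Your proposal is correct and follows essentially the same route as the paper: the paper combines Proposition \ref{th_hgbernoullicarlitz2} (itself obtained from the quotient rule (\ref{quotientrule2})) with an application of the product rule of Lemma \ref{productrule2} to the $k$-fold power of $\bigl(e_C(x)-\mathcal E_{N-1}(x)\bigr)/x^{r^N}$, identifying the resulting index sum with $\Pi(k)\sts{n+kr^N}{k}_{C,\ge N}/\Pi(n+kr^N)$ via the generating function (\ref{assocstscarlitz}). Your only deviations are cosmetic --- you re-derive the Proposition inline rather than citing it, and you read off the $x^n$-coefficient of $h^k$ directly instead of writing out the index sum --- and your bookkeeping of the shift $m=n+kr^N$ and the extra factor $D_N^k$ is handled correctly.
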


\begin{proof} 
From (\ref{assocstscarlitz}), we have 
\begin{align*} 
\left(\sum_{j=0}^\infty\frac{x^{r^{N+j}-r^N}}{D_{N+j}}\right)^k&=\left(\frac{e_C(x)-\mathcal E_{N-1}(x)}{x^{r^N}}\right)^k\\
&=\sum_{n=k}^\infty\frac{\Pi(k)}{\Pi(n)}\sts{n}{k}_{C,\ge N}x^{n-k r^N}\\
&=\sum_{n=-(r^N-1)k}^\infty\frac{\Pi(k)}{\Pi(n+k r^N)}\sts{n+k r^N}{k}_{C,\ge N}x^n\,. 
\end{align*} 
Notice that 
\begin{align*}
\left.H^{(e)}\left(\frac{e_C(x)-\mathcal E_{N-1}(x)}{x^{r^N}}\right)\right|_{x=0}&=\left.\sum_{j=0}^\infty\frac{1}{D_{N+j}}\binom{r^{N+j}-r^N-e}{e}x^{r^{N+j}-r^N-e}\right|_{x=0}\\
&=
\begin{cases} 
\frac{1}{D_{N+i}}&\text{if $r^{N+i}-r^N=e$};\\
0&\text{otherwise}.
\end{cases} 
\end{align*}  
Applying Lemma \ref{productrule2} with 
$$
f_1(t)=\cdots=f_k(t)=\frac{e_C(x)-\mathcal E_{N-1}(x)}{x^{r^N}}\,, 
$$
we get 
\begin{equation}  
\frac{\Pi(k)}{\Pi(n+k r^N)}\sts{n+k r^N}{k}_{C,\ge N}=\sum_{i_1,\dots,i_k\ge 0\atop r^{N+i_1}+\cdots+r^{N+i_k}=n+k r^N}\frac{1}{D_{N+i_1}\cdots D_{N+i_k}}\,. 
\label{eq20} 
\end{equation}   
Together with Proposition \ref{th_hgbernoullicarlitz2}, we can get the desired result.  
\end{proof}  
\bigskip

{\bf Example.} 
Let $r=3$, $N=2$ and $n=18$. Comparing the coefficient of $x^n$ on both sides of
$$
\sum_{n=0}^\infty\frac{\Pi(k)}{\Pi(n)}\sts{n}{k}_{C,\ge 2}x^n=\left(\frac{x^9}{D_2}+\frac{x^{27}}{D_3}+\frac{x^{81}}{D_4}+\cdots\right)^k\,,
$$ 
for $k=1,2,\dots,18$, we have 
$$
\frac{\Pi(k)}{\Pi(18+9 k)}\sts{18+9 k}{k}_{C<\ge 2}=\frac{k}{D_2^{k-1}D_3}\,. 
$$ 
Hence, 
\begin{align*}
\frac{BC_{2,18}}{\Pi(18)}&=\sum_{k=1}^{18}\binom{19}{k+1}(-D_2)^k\frac{k}{D_2^{k-1}D_3}\\
&=\frac{D_2}{D_3}\sum_{k=1}^{18}(-1)^k\binom{19}{k+1}k
=-\frac{D_2}{D_3}\,. 
\end{align*} 
\bigskip

Bernoulli-Carlitz numbers can be expressed in term of the Stirling-Carlitz numbers of the second kind: 
$$
BC_n=\sum_{j=0}^\infty\frac{(-1)^j D_j}{L_j^2}\sts{n}{r^j-1}_C
$$ 
(\cite[Theorem 2]{KK}).  
When $N=0$, Theorem \ref{th27} is reduced to a different expression of Bernolli-Carlitz numbers in terms of the Stirling-Carlitz numbers of the second kind.  

\begin{Cor} 
For $n\ge 1$, we have 
$$
BC_n=\Pi(n)\sum_{k=1}^{n}\binom{n+1}{k+1}\frac{(-1)^k\Pi(k)}{\Pi(n+k)}\sts{n+k}{k}_C\,. 
$$
\end{Cor}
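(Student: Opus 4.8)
The plan is to obtain the Corollary as the $N=0$ specialization of Theorem \ref{th27}. First I would record how each $N$-dependent ingredient degenerates at $N=0$: since $D_0=1$ the weight $(-D_N)^k$ becomes $(-1)^k$, and since $r^0=1$ the exponent $n+k r^N$ collapses to $n+k$. Moreover, by the convention fixed just after (\ref{reststfcarlitz}), the associated Stirling-Carlitz number at level $m=0$ reduces to the ordinary one, $\sts{n+k}{k}_{C,\ge 0}=\sts{n+k}{k}_C$; concretely this holds because the partial sum $\mathcal E_{-1}(z)$ is empty, so the defining relation (\ref{assocstscarlitz}) becomes (\ref{stscarlitz}). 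Finally $BC_{0,n}=BC_n$ by definition. Substituting these four facts into the formula of Theorem \ref{th27} yields exactly the asserted identity.

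The one point that needs care is that Theorem \ref{th27} is stated only for $N\ge 1$, while we want $N=0$. I would handle this by noting that its proof never invokes the hypothesis $N\ge 1$: it rests on Proposition \ref{th_hgbernoullicarlitz2}, valid for every $N$, together with the identity (\ref{eq20}). The cleanest route is therefore to re-run that short argument at $N=0$. Starting from Proposition \ref{th_hgbernoullicarlitz2} specialized to $N=0$,
$$
BC_n=\Pi(n)\sum_{k=1}^{n}\binom{n+1}{k+1}(-1)^k\sum_{i_1,\dots,i_k\ge 0\atop r^{i_1}+\cdots+r^{i_k}=n+k}\frac{1}{D_{i_1}\cdots D_{i_k}}\,,
$$
and then invoking (\ref{eq20}) at $N=0$,
$$
\sum_{i_1,\dots,i_k\ge 0\atop r^{i_1}+\cdots+r^{i_k}=n+k}\frac{1}{D_{i_1}\cdots D_{i_k}}=\frac{\Pi(k)}{\Pi(n+k)}\sts{n+k}{k}_C\,,
$$
I would replace the inner sum by the right-hand side and collect the factor $\Pi(k)/\Pi(n+k)$ to reach the claim.

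I do not anticipate a genuine obstacle, since the content is entirely a specialization of results already established. The only diligence required is bookkeeping: verifying that $D_0=1$, $r^0=1$, and the empty partial sum $\mathcal E_{-1}=0$ propagate correctly through (\ref{eq20}), that the identification $\sts{n+k}{k}_{C,\ge 0}=\sts{n+k}{k}_C$ is exactly the $m=0$ instance of (\ref{assocstscarlitz}), and that neither the proof of (\ref{eq20}) nor that of Theorem \ref{th27} tacitly assumed $N\ge 1$.
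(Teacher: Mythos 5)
Your proposal is correct and follows exactly the route the paper takes: the Corollary is stated there as the $N=0$ specialization of Theorem \ref{th27}, using $D_0=1$, $r^0=1$, and the identification $\sts{n+k}{k}_{C,\ge 0}=\sts{n+k}{k}_C$ noted after (\ref{assocstscarlitz}). Your extra care about the hypothesis $N\ge 1$ (re-running the argument through Proposition \ref{th_hgbernoullicarlitz2} and (\ref{eq20}) at $N=0$) is a reasonable tightening of a step the paper leaves implicit, but it is not a different method.
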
 

\noindent 
{\it Remark.}  
This is an analogue of 
$$
B_n=\sum_{k=1}^n\dfrac{(-1)^{k}\binom{n+1}{k+1}}{\binom{n+k}{k}}\sts{n+k}{k}\,,
$$
which is a simple formula appeared in \cite{Gould,SS}. 

\bigskip  

Similarly, due to associated Stirling-Carlitz numbers of the first kind in (\ref{assocstfcarlitz}), we can obtain a more explicit expression of hypergeometric Cauchy-Carlitz numbers, expressed in Theorem \ref{th_hgcauchycarlitz1} or Proposition \ref{th_hgcauchycarlitz2}.

\begin{theorem} 
For $N\ge 1$ and $n\ge 1$, we have  
$$
CC_{N,n}=
\Pi(n)\sum_{k=1}^{n}\binom{n+1}{k+1}\frac{(-1)^{N k}(-L_N)^k\Pi(k)}{\Pi(n+k r^N)}\stf{n+k r^N}{k}_{C,\ge N}\,. 
$$ 
\label{th28} 
\end{theorem}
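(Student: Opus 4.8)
The plan is to mirror the proof of Theorem \ref{th27} almost verbatim, replacing the Carlitz exponential by the Carlitz logarithm and the second-kind associated Stirling-Carlitz numbers by the first-kind ones, while carefully tracking the alternating signs introduced by the factors $(-1)^i$ in $\log_C$. First I would set $h:=(\log_C(x)-\mathcal F_{N-1}(x))/x^{r^N}$ and record its power-series expansion $h=\sum_{j=0}^\infty(-1)^{N+j}x^{r^{N+j}-r^N}/L_{N+j}$, which follows at once from $\log_C(x)-\mathcal F_{N-1}(x)=\sum_{i=N}^\infty(-1)^i x^{r^i}/L_i$. Reading off coefficients from the defining identity (\ref{assocstfcarlitz}) and performing the re-indexing $n\mapsto n+k r^N$, I obtain that the coefficient of $x^n$ in $h^k$ equals $\frac{\Pi(k)}{\Pi(n+k r^N)}\stf{n+k r^N}{k}_{C,\ge N}$, exactly as in the passage of Theorem \ref{th27} leading to its first display.

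Next I would compute the Hasse-Teichm\"uller derivative $H^{(e)}(h)|_{x=0}$ termwise. As in the Bernoulli-Carlitz computation, each monomial $x^{r^{N+j}-r^N}$ survives evaluation at $x=0$ only when the order $e$ matches its exponent exactly, so $H^{(e)}(h)|_{x=0}$ equals $(-1)^{N+i}/L_{N+i}$ when $e=r^{N+i}-r^N$ for some $i\ge 0$, and vanishes otherwise. The presence of the extra sign $(-1)^{N+i}$ is the sole new feature relative to the Bernoulli-Carlitz case, where the analogous evaluation gave $1/D_{N+i}$.

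I would then apply the product rule Lemma \ref{productrule2} with $f_1=\cdots=f_k=h$ and extract the coefficient of $x^n$, just as in the passage producing (\ref{eq20}). The constraint $e_1+\cdots+e_k=n$ becomes $r^{N+i_1}+\cdots+r^{N+i_k}=n+k r^N$, and the product of the $k$ evaluated derivatives contributes $\prod_{l=1}^k(-1)^{N+i_l}/L_{N+i_l}$. Factoring the sign as $(-1)^{Nk}(-1)^{i_1+\cdots+i_k}$ yields the first-kind analogue of (\ref{eq20}),
\[
\frac{\Pi(k)}{\Pi(n+k r^N)}\stf{n+k r^N}{k}_{C,\ge N}=(-1)^{Nk}\sum_{i_1,\dots,i_k\ge 0\atop r^{N+i_1}+\cdots+r^{N+i_k}=n+k r^N}\frac{(-1)^{i_1+\cdots+i_k}}{L_{N+i_1}\cdots L_{N+i_k}}\,.
\]
Finally, I would solve this for the inner sum (multiplying through by $(-1)^{Nk}$, which is its own inverse) and substitute the result into the expression for $CC_{N,n}$ furnished by Proposition \ref{th_hgcauchycarlitz2}; the factor $(-1)^{Nk}$ then rides along to produce precisely the stated coefficient $\frac{(-1)^{Nk}(-L_N)^k\Pi(k)}{\Pi(n+k r^N)}$.

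The only genuine obstacle is the sign bookkeeping: one must verify that the $k$ independent factors $(-1)^{N+i_l}$ collapse cleanly into the global constant $(-1)^{Nk}$ times the ``content'' sign $(-1)^{i_1+\cdots+i_k}$, and that this $(-1)^{Nk}$ survives passage through Proposition \ref{th_hgcauchycarlitz2} into the final formula. Everything else is formally identical to the Bernoulli-Carlitz argument, since the partial-sum subtraction $\mathcal F_{N-1}$ plays exactly the role of $\mathcal E_{N-1}$ and the Hasse-Teichm\"uller machinery is sign-agnostic.
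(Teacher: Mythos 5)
Your proposal is correct and follows the paper's own argument essentially verbatim: the same function $h=(\log_C(x)-\mathcal F_{N-1}(x))/x^{r^N}$, the same Hasse--Teichm\"uller evaluation giving $(-1)^{N+i}/L_{N+i}$, the same application of the product rule (Lemma \ref{productrule2}) to derive the first-kind analogue of (\ref{eq20}), and the same final substitution into Proposition \ref{th_hgcauchycarlitz2}. Your sign bookkeeping, factoring $\prod_l(-1)^{N+i_l}$ as $(-1)^{Nk}(-1)^{i_1+\cdots+i_k}$, matches the paper's identity (\ref{eq202}) exactly.
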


\begin{proof} 
From (\ref{assocstfcarlitz}), we have 
\begin{align*} 
\left(\sum_{j=0}^\infty\frac{(-1)^{N+j}x^{r^{N+j}-r^N}}{L_{N+j}}\right)^k&=\left(\frac{\log_C(x)-\mathcal F_{N-1}(x)}{x^{r^N}}\right)^k\\
&=\sum_{n=k}^\infty\frac{\Pi(k)}{\Pi(n)}\stf{n}{k}_{C,\ge N}x^{n-k r^N}\\
&=\sum_{n=-(r^N-1)k}^\infty\frac{\Pi(k)}{\Pi(n+k r^N)}\stf{n+k r^N}{k}_{C,\ge N}x^n\,. 
\end{align*} 
Notice that 
\begin{align*}
\left.H^{(e)}\left(\frac{\log_C(x)-\mathcal F_{N-1}(x)}{x^{r^N}}\right)\right|_{x=0}&=\left.\sum_{j=0}^\infty\frac{(-1)^{N+j}}{L_{N+j}}\binom{r^{N+j}-r^N-e}{e}x^{r^{N+j}-r^N-e}\right|_{x=0}\\
&=
\begin{cases} 
\frac{(-1)^{N+i}}{L_{N+i}}&\text{if $r^{N+i}-r^N=e$};\\
0&\text{otherwise}.
\end{cases} 
\end{align*}  
Applying Lemma \ref{productrule2} with 
$$
f_1(t)=\cdots=f_k(t)=\frac{\log_C(x)-\mathcal F_{N-1}(x)}{x^{r^N}}\,, 
$$
we get 
\begin{equation}  
\frac{\Pi(k)}{\Pi(n+k r^N)}\stf{n+k r^N}{k}_{C,\ge N}=\sum_{i_1,\dots,i_k\ge 0\atop r^{N+i_1}+\cdots+r^{N+i_k}=n+k r^N}\frac{(-1)^{N k+i_1+\cdots+i_k}}{L_{N+i_1}\cdots L_{N+i_k}}\,. 
\label{eq202} 
\end{equation}   
Together with Proposition \ref{th_hgcauchycarlitz2}, we can get the desired result.  
\end{proof}  
\bigskip 

\noindent 
{\bf Example.}  
Let $r=3$, $N=3$ and $n=270$. 
Comparing the coefficient of $x^n$ on both sides of
$$
\sum_{n=0}^\infty\frac{\Pi(k)}{\Pi(n)}\stf{n}{k}_{C,\ge 3}x^n=\left(-\frac{x^{27}}{L_3}+\frac{x^{81}}{L_4}-\frac{x^{243}}{L_5}+\frac{x^{729}}{L_6}-\cdots\right)^k\,,
$$ 
for $k=1,2,3,4$, we have 
$$
\frac{\Pi(k)}{\Pi(270+27 k)}\stf{270+27 k}{k}_{C<\ge 3}=(-1)^{k-1}\frac{k(k-1)}{L_3^{k-2}L_4 L_5} . 
$$ 
and for $k=5,6,\dots,270$, we have 
$$
\frac{\Pi(k)}{\Pi(270+27 k)}\stf{270+27 k}{k}_{C<\ge 3}=(-1)^{k-1}\frac{\binom{k}{5}}{L_3^{k-5}L_4^5}
+(-1)^{k-1}\frac{k(k-1)}{L_3^{k-2}L_4 L_5}\,. 
$$ 
Therefore, 
\begin{align*}
\frac{CC_{3,270}}{\Pi(270)}&=\sum_{k=1}^{270}\binom{271}{k+1}(-1)^{3 k}(-L_3)^k\frac{(-1)^{k-1}k(k-1)}{L_3^{k-2}L_4 L_5}\\
&\quad +\sum_{k=5}^{270}\binom{271}{k+1}(-1)^{3 k}(-L_3)^k\frac{(-1)^{k-1}\binom{k}{5}}{L_3^{k-5}L_4^5}\\
&=\frac{L_3^2}{L_4 L_5}\sum_{k=1}^{270}(-1)^{k-1}k(k-1)\binom{271}{k+1}
+\frac{L_3^5}{L_4^5}\sum_{k=5}^{270}(-1)^{k-1}\binom{271}{k+1}\binom{k}{5}\\
&=-\frac{2 L_3^2}{L_4 L_5}+\frac{L_3^5}{L_4^5}\,. 
\end{align*}

\bigskip 

Cauchy-Carlitz numbers can be expressed in term of the Stirling-Carlitz numbers of the first kind: 
$$
CC_n=\sum_{j=0}^\infty\frac{1}{L_j}\stf{n}{r^j-1}_C
$$ 
(\cite[Theorem 1]{KK}).  
When $N=0$, Theorem \ref{th28} is reduced to a different expression of Cauchy-Carlitz numbers in terms of the Stirling-Carlitz numbers of the first kind.  

\begin{Cor} 
For $n\ge 1$, we have 
$$
CC_n=\Pi(n)\sum_{k=1}^{n}\binom{n+1}{k+1}\frac{(-1)^k\Pi(k)}{\Pi(n+k)}\stf{n+k}{k}_C\,. 
$$
\end{Cor}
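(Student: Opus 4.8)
The plan is to obtain this corollary simply by specializing Theorem \ref{th28} to the boundary case $N=0$, so the first task is to check that every ingredient of that theorem's proof remains meaningful when $N=0$. Recall from the definition (\ref{def:hgcaucarlitz}) that at $N=0$ the defining generating function collapses to $x/\log_C(x)$, since the subtracted partial sum $\sum_{i=0}^{-1}$ is empty and $L_0=1$; hence $CC_{0,n}=CC_n$, the original Cauchy-Carlitz numbers. I would then record the elementary substitutions $r^0=1$ (so that $n+k r^N=n+k$), $(-1)^{N k}=1$, and $(-L_N)^k=(-L_0)^k=(-1)^k$.

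Next I would verify that the associated Stirling-Carlitz number $\stf{n+k}{k}_{C,\ge 0}$ appearing in the $N=0$ instance of Theorem \ref{th28} is exactly the ordinary Stirling-Carlitz number $\stf{n+k}{k}_C$. This is precisely the $m=0$ identification noted after (\ref{assocstfcarlitz}): with the convention that $\mathcal F_{-1}(z)$ is the empty sum $0$, the generating series $\bigl(\log_C(z)-\mathcal F_{-1}(z)\bigr)^k/\Pi(k)$ reduces to $\bigl(\log_C(z)\bigr)^k/\Pi(k)$, which is the defining series (\ref{stfcarlitz}) for $\stf{n}{k}_C$. With these identifications in hand, substituting $N=0$ into the statement of Theorem \ref{th28} yields the claimed formula term by term.

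Alternatively, and perhaps more transparently, I would re-run the proof of Theorem \ref{th28} directly at $N=0$: the key Hasse-Teichm\"uller computation gives $\left.H^{(e)}(\log_C(x)/x)\right|_{x=0}=(-1)^i/L_i$ when $r^i-1=e$ and $0$ otherwise, Lemma \ref{productrule2} then produces the $N=0$ case of (\ref{eq202}), and combining this with Proposition \ref{th_hgcauchycarlitz2} at $N=0$ (where $(-L_0)^k=(-1)^k$) gives the result. The only point requiring care --- and the sole potential obstacle --- is the boundary bookkeeping at $N=0$: one must confirm that the empty-sum convention for $\mathcal F_{-1}$ and the identification $\stf{n+k}{k}_{C,\ge 0}=\stf{n+k}{k}_C$ are mutually consistent, since Theorem \ref{th28} was stated only for $N\ge 1$. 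Once that consistency is secured, no genuine computation remains.
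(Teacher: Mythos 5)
Your proposal is correct and follows exactly the paper's route: the paper obtains this corollary precisely by setting $N=0$ in Theorem \ref{th28}, using $L_0=1$, $r^0=1$, and the identification $\stf{n+k}{k}_{C,\ge 0}=\stf{n+k}{k}_C$ noted after (\ref{assocstfcarlitz}). Your extra care in checking that the proof of Theorem \ref{th28} (stated only for $N\ge 1$) still goes through at the boundary $N=0$ is a welcome refinement of the paper's one-line justification, but it is not a different argument.
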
 

\noindent 
{\it Remark.} 
This is an analogue of 
$$
c_n=\sum_{k=1}^n\dfrac{(-1)^{n-k}\binom{n+1}{k+1}}{\binom{n+k}{k}}\stf{n+k}{k}\,,
$$
which is Proposition 2 in \cite{KK}.  


\end{document}